\definecolor{red}{rgb}{1,0,0}
\definecolor{blue}{rgb}{0,0,.7}
\definecolor{green}{rgb}{0,.6,0}
\definecolor{purp}{rgb}{.5,0,.5}
\numberwithin{figure}{section}   % added LH 11/15/17
\newtheorem{thm}{Theorem}[section]
\newtheorem{lem}[thm]{Lemma}
\newtheorem{quest}[thm]{Question}
\theoremstyle{definition}
\theoremstyle{definition}
\theoremstyle{definition}
\newcommand{\rad}{\operatorname{rad}}
\newcommand{\bit}{\begin{itemize}}
\newcommand{\eit}{\end{itemize}}
\newcommand{\ben}{\begin{enumerate}}
\newcommand{\een}{\end{enumerate}}
\newcommand{\beq}{\begin{equation}}
\newcommand{\eeq}{\end{equation}}
\newcommand{\bea}{\begin{eqnarray*}} % * means no number
\newcommand{\eea}{\end{eqnarray*}}
\newcommand{\bpf}{\begin{proof}}
\newcommand{\epf}{\end{proof}\ms}
\newcommand{\bmt}{\begin{bmatrix}}
\newcommand{\emt}{\end{bmatrix}}
\newcommand{\ms}{\medskip}
\title{Applications of the abc conjecture to powerful numbers}
\author{P.A. CrowdMath}
\begin{document}
%\linenumbers
\maketitle

%%%%%%%%%%%%%%%%%%%%%%%%%%%%%%%%%%%%%%%%%%%

\begin{abstract}
The abc conjecture is one of the most famous unsolved problems in number theory. The conjecture claims for each real $\epsilon > 0$ that there are only a finite number of coprime positive integer solutions to the equation $a+b = c$ with $c > (\rad(a b c))^{1+\epsilon}$. If true, the abc conjecture would imply many other famous theorems and conjectures as corollaries. In this paper, we discuss the abc conjecture and find new applications to powerful numbers, which are integers $n$ for which $p^2 | n$ for every prime $p$ such that $p | n$. We answer several questions from an earlier paper on this topic, assuming the truth of the abc conjecture.
\end{abstract}

\section{Introduction}\label{s:intro}

For each positive integer $n$ define the \emph{radical} $\rad(n)$ of $n$ to be the product of the distinct prime divisors of $n$. In 1985, David Masser conjectured \cite{mas} that for every positive real number $\epsilon$, there are only finitely many triples $(a, b, c)$ of coprime positive integers with $a+b = c$ such that $c > (\rad(a b c))^{1+\epsilon}$. This conjecture has come to be called the \emph{abc conjecture}, and a proof of the abc conjecture would affirm many other famous conjectures and theorems in number theory. For example, both Fermat's Last Theorem for sufficiently large powers and Roth's Theorem on diophantine approximation of algebraic numbers are corollaries of the abc conjecture \cite{van}, as are Lang's conjecture on the N\'{e}ron-Tate height and the Erd\H{o}s-Woods conjecture on sequences of consecutive integers.

Call a positive integer $x$ a \emph{powerful number} if for every prime $p$ such that $p | x$, we also have $p^2 | x$. Call $x$ a \emph{$k$-powerful number} if $p^k | x$ for every prime $p$ with $p | x$. In \cite {cupa}, Cushing and Pascoe found new ways to apply the abc conjecture to prove conditional results about powerful numbers. In addition to proving new corollaries of the abc conjecture, they also asked several questions about powerful numbers. In this paper we answer four of the questions from \cite{cupa}, assuming the truth of the abc conjecture. In particular, we prove that there are only finitely many solutions to the equation $x+y= z$ with $(x, y) = 1$ and $x, y, z$ all $4$-powerful, solving Problem 2 from \cite{cupa}. We also show for $n \geq 5$ that there are only finitely many powerful numbers of the form $x^n+y^n$ where $(x, y) = 1$, partially solving Problem 3 from \cite{cupa} for $n \geq 5$. For $n \leq 3$, there are infinitely many powerful numbers of the form $x^n+y^n$ with $(x, y) = 1$. Problem 3 is not completely solved, since the $n = 4$ case is still open.

We solve Problem 4 from \cite{cupa} by proving that there are only finitely many powerful numbers of the form $k^n+r$ for fixed positive integers $k$ and $r$ with $(k, r) = 1$. We also solve Problem 5 from \cite{cupa} by showing that for any fixed positive integers $r$ and $k$, the numbers $(n!)^r + k$ are powerful only finitely often. In addition, we present the solution for another problem from the author of \cite{cupa} about powerful numbers that was posted on the CrowdMath 2018 forum. In the last section, we discuss several open problems. 

\section{Results on powerful numbers}

For the remainder of the paper, we assume that the abc conjecture is true. We start with a solution to Problem 2 in \cite{cupa}.

\begin{thm}
There are only finitely many solutions to the equation $x+y= z$ with $(x, y) = 1$ and $x, y, z$ all $4$-powerful.
\end{thm}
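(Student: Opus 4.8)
The plan is to reduce the statement to a single application of the abc conjecture, the whole point being to choose $\epsilon$ so that the radical savings coming from $4$-powerfulness beat the trivial size bound on $z$. First I would record the elementary fact that if $n$ is $k$-powerful then $\rad(n)^k \mid n$, and hence $\rad(n) \le n^{1/k}$: writing $n = \prod_i p_i^{a_i}$ with every $a_i \ge k$, the product $\prod_i p_i^k = \rad(n)^k$ divides $n$. For the case at hand this gives $\rad(x) \le x^{1/4}$, $\rad(y) \le y^{1/4}$, and $\rad(z) \le z^{1/4}$.

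Next I would verify that the whole triple is coprime, so that it is a legitimate abc triple. Since $(x,y) = 1$ and $x+y = z$, any prime dividing $z$ together with one of $x, y$ would divide the other, so in fact $(x,z) = (y,z) = 1$ as well. Thus $x, y, z$ are pairwise coprime and $\rad(xyz) = \rad(x)\,\rad(y)\,\rad(z)$. I would also note that since $x$ and $y$ are positive integers with $x+y=z$, automatically $x, y < z$.

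Then I would bound the radical from above and apply abc. Combining the three radical bounds with $x, y < z$ gives $\rad(xyz) = \rad(x)\,\rad(y)\,\rad(z) \le (xyz)^{1/4} < (z^3)^{1/4} = z^{3/4}$. Taking $\epsilon = 1/3$, this yields $\rad(xyz)^{1+\epsilon} = \rad(xyz)^{4/3} < \bigl(z^{3/4}\bigr)^{4/3} = z$, so every solution satisfies $z > (\rad(xyz))^{1+\epsilon}$ with the fixed value $\epsilon = 1/3 > 0$. By the abc conjecture there are only finitely many coprime triples with this property, and therefore only finitely many solutions of the desired form.

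A remark on where the content really lies: the computation is short, and the only genuine constraint is the choice of exponent. Running the same argument for general $k$-powerful numbers produces $\rad(xyz) < z^{3/k}$, which forces the abc condition to hold with $\epsilon = \tfrac{k}{3} - 1$; this is positive exactly when $k \ge 4$. So the value $4$ in the statement is not incidental but optimal for this method, and the main subtlety to flag is that the analogous estimate for $3$-powerful numbers yields only $\epsilon = 0$, which is too weak for abc to bite.
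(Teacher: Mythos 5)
Your proof is correct and follows essentially the same route as the paper: bound $\rad(x),\rad(y),\rad(z)$ by the fourth roots using $4$-powerfulness, use $x,y<z$ to get $\rad(xyz)^{4/3}<z$, and apply the abc conjecture with $\epsilon=\tfrac13$. Your additional checks (pairwise coprimality of the triple, and the remark that $k=4$ is exactly where $\epsilon=\tfrac{k}{3}-1$ becomes positive) are correct and make the argument slightly more complete than the paper's version, but the underlying computation is identical.
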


\begin{proof}
Let $a = x$, $b = y$, $c = z$, and $\epsilon = \frac{1}{3}$. Observe that $$(\rad (xyz))^{\frac{4}{3}} \le (\rad(x) \rad(y) \rad(z) )^{\frac{4}{3}}\le (x^{\frac{1}{4}}y^{\frac{1}{4}}z^{\frac{1}{4}})^{\frac{4}{3}} = x^{\frac{1}{3}}y^{\frac{1}{3}}z^{\frac{1}{3}}$$ which is clearly less than $z$. Thus by the abc conjecture, there are only finitely many solutions to the equation $x+y= z$ with $(x, y) = 1$ and $x, y, z$ all $4$-powerful.
\end{proof}

The next theorem partially solves Problem 3 from \cite{cupa}, which was to determine when $x^n+y^n$ is powerful, with $(x,y) = 1$. Note that for $n = 2$ and $n = 3$, there are infinitely many powerful numbers of the form $x^n+y^n$ with $x,y$ coprime, since there are infinitely many coprime solutions to $x^2+y^2 = z^2$ and $x^3+y^3 = z^2$. The problem is still open for $n = 4$.

\begin{thm}
For $n \geq 5$, there are only finitely many powerful numbers of the form $x^n+y^n$ where $(x, y) = 1$.
\end{thm}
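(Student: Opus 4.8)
The plan is to apply the abc conjecture to the triple $a = x^n$, $b = y^n$, $c = z = x^n + y^n$, exploiting the fact that a powerful number has an unusually small radical. First I would record the elementary lemma that if $m$ is powerful then $\rad(m) \le \sqrt{m}$: writing $m = \prod p_i^{e_i}$ with every $e_i \ge 2$ gives $m \ge \prod p_i^2 = (\rad(m))^2$, so $\rad(m) \le m^{1/2}$.

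Next I would bound $\rad(abc)$. Since $(x,y) = 1$, the triple $(x^n, y^n, z)$ is coprime, and $\rad(x^n y^n z) \le \rad(x)\rad(y)\rad(z) \le x\,y\,\rad(z)$. Assuming $z = x^n + y^n$ is powerful, the lemma gives $\rad(z) \le \sqrt{z}$. Moreover $x^n < z$ and $y^n < z$ force $x, y < z^{1/n}$, so $xy < z^{2/n}$. Combining these, $\rad(abc) < z^{2/n + 1/2}$.

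The point is now purely numerical: for $n \ge 5$ the exponent satisfies $\frac{2}{n} + \frac{1}{2} \le \frac{9}{10} < 1$. Fixing $\epsilon = \frac{1}{10}$ (uniformly in $n$), I get $(\rad(abc))^{1+\epsilon} < z^{(2/n+1/2)(1+\epsilon)} \le z^{99/100} < z$ for every $z > 1$. Hence every such triple satisfies $c > (\rad(abc))^{1+\epsilon}$, so by the abc conjecture there are only finitely many of them; since each value $x^n + y^n$ arises from only finitely many pairs $(x,y)$, only finitely many powerful numbers have this form.

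The argument is short once the radical bound is in place, so there is no deep obstacle; the crux — and the only place $n \ge 5$ is used — is verifying that the exponent $\frac{2}{n} + \frac{1}{2}$ drops strictly below $1$, which fails exactly at $n = 4$ (where $\frac{2}{4} + \frac{1}{2} = 1$) and explains why that case remains open. I would also take care over two bookkeeping points: confirming that the coprimality hypothesis of the abc conjecture transfers from $(x,y) = 1$ to $(x^n, y^n) = 1$, and justifying the reduction from ``finitely many coprime triples'' to ``finitely many powerful values of $x^n + y^n$.''
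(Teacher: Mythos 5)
Your proposal is correct and follows essentially the same route as the paper: apply the abc conjecture to $a=x^n$, $b=y^n$, $c=z$, use $\rad(z)\le\sqrt{z}$ for powerful $z$, and bound $xy$ by a small power of $z$ so that the total exponent drops below $1$ for $n\ge 5$. Your derivation of $xy<z^{2/n}\le z^{2/5}$ directly from $x^n,y^n<z$ is in fact a cleaner way to get the bound the paper obtains via the inequality $(xy)^{10}\le\frac{(x^5+y^5)^4}{16}<z^4$, and your fixed $\epsilon=\tfrac{1}{10}$ plays the same role as the paper's $\epsilon=\tfrac{1}{9}$.
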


\begin{proof}
Let $x^n + y^n = z$ where $z$ is a powerful number and $(x,y) = 1$. Without loss of generality, we let $x \le y$. Then, we have that$$(xy)^{10} = (\sqrt{x^5y^5})^4 \le \frac{(x^5 + y^5)^4}{16} <  (x^5 + y^5)^4 < (x^n + y^n )^4= z^4.$$

Now, we apply the $abc$ conjecture. We let $a = x^n$, $b = y^n$, and $c = z$ for relatively prime $a$, $b$, and $c$. Letting $\epsilon = \frac{1}{9}$, we see that$$(\rad(x^ny^nz))^{\frac{10}{9}} \le (\rad (x^n))^{\frac{10}{9}}( \rad (y^n))^{\frac{10}{9}} (\rad (z))^{\frac{10}{9}} \le $$$$(\rad (x))^{\frac{10}{9}} (\rad (y))^{\frac{10}{9}} (z^{\frac{1}{2}})^{\frac{10}{9}} \le (xy)^{\frac{10}{9}}z^{\frac{5}{9}} < z.$$ Thus for $n \geq 5$, there are a finite number of solutions.
\end{proof}

Next we solve Problem 4 from \cite{cupa}, which was to determine whether $2^n+1$ is powerful only finitely often. We prove the more general result that there are a finite number of powerful numbers of the form $k^n + r$ for any fixed positive integers $k$ and $r$ with $(k, r) = 1$.

\begin{thm}
If $k$ are $r$ are fixed positive integers with $(k, r) = 1$, then there are a finite number of powerful numbers of the form $k^n + r$.
\end{thm}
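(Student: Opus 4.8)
The plan is to mirror the strategy of the two preceding theorems by setting up a single $abc$-triple and exploiting the fact that a powerful number $z$ satisfies $\rad(z) \le z^{1/2}$. First I would put $a = k^n$, $b = r$, and $c = z = k^n + r$, and check that these form a legitimate coprime triple: since $(k,r) = 1$ we have $(k^n, r) = 1$, and from $a + b = c$ it follows that $a$, $b$, $c$ are pairwise coprime. I would also dispose of the degenerate case $k = 1$ (where $k^n + r$ takes a single value), so that I may assume $k \ge 2$ and hence $z = k^n + r \to \infty$ as $n \to \infty$.

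The key observation is that raising $k$ to the $n$th power does not enlarge its radical: $\rad(k^n) = \rad(k) \le k$, a constant independent of $n$. Similarly $\rad(r) \le r$ is fixed. Combining these with the powerful-number bound $\rad(z) \le z^{1/2}$ and the multiplicativity of the radical on coprime arguments gives $\rad(abc) = \rad(k^n)\rad(r)\rad(z) \le C z^{1/2}$, where $C = \rad(k)\rad(r)$ is a constant.

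With this bound in hand I would fix a convenient $\epsilon$, say $\epsilon = \tfrac12$, and estimate $(\rad(abc))^{1+\epsilon} \le (C z^{1/2})^{3/2} = C^{3/2} z^{3/4}$. Since the exponent on $z$ is $3/4 < 1$, this quantity is eventually dominated by $c = z$: indeed $C^{3/2} z^{3/4} < z$ precisely when $z > C^6$, which holds for all sufficiently large $n$ because $z \to \infty$. Thus all but finitely many $n$ yield a triple with $c > (\rad(abc))^{1+\epsilon}$, and the abc conjecture then bounds the number of such triples. As distinct values of $n$ produce distinct triples, only finitely many $n$ can make $k^n + r$ powerful.

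I do not expect a serious obstacle here: the whole argument rests on the two clean facts that $\rad(k^n)$ is constant in $n$ and that powerful numbers have small radical, after which the chain of inequalities is routine. The only points requiring care are verifying coprimality of the triple and ensuring that the degenerate case $k = 1$ (together with any small-$n$ exceptions) is absorbed into the ``finitely many'' conclusion rather than breaking the $z \to \infty$ step.
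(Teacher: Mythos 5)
Your proposal is correct and follows essentially the same route as the paper: take $a = k^n$, $b = r$, $c = z$, use $\rad(k^n) = \rad(k)$ and $\rad(z) \le z^{1/2}$ to bound $\rad(abc)$ by a constant times $z^{1/2}$, and apply the abc conjecture with a fixed $\epsilon$ so that the resulting power of $z$ has exponent less than $1$. The only cosmetic difference is your choice $\epsilon = \tfrac{1}{2}$ versus the paper's $\epsilon = \tfrac{1}{9}$, which does not affect the argument.
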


\begin{proof}
Suppose that $k^n+r = z$ for powerful $z$. Let $a = k^n$, $b = r$, $c = z$, and $\epsilon = \frac{1}{9}$. Observe that$$(\rad (k^nrz))^{\frac{10}{9}} \le (\rad (k^n))^{\frac{10}{9}}(\rad (r))^{\frac{10}{9}}(\rad (z))^{\frac{10}{9}} \le $$$$(\rad (k))^{\frac{10}{9}}(\rad (r))^{\frac{10}{9}}(z^{\frac{1}{2}})^{\frac{10}{9}} \le k^{\frac{10}{9}}r^{\frac{10}{9}}z^{\frac{5}{9}}$$ which is less than $z$ for $n$ sufficiently large, since $k$ and $r$ are fixed. Therefore if the abc conjecture is true, then for any fixed $k \ge 2$ and $r \geq 1$, there must be a finite number of powerful numbers of the form $k^n + r$. For $k = 1$, we have only one possible value of $k^n + r$. Therefore, for any fixed positive $k$ and $r$ with $(k, r) = 1$, there are a finite number of powerful numbers of the form $k^n + r$.
\end{proof}

The next result solves Problem 5 from \cite{cupa}. For the proof of this result, we define the \emph{primorial of $x$} denoted $x\# $ to be the product of all distinct primes that are at most $x$.

\begin{thm} 
For any fixed positive integers $r$ and $k$, the numbers $(n!)^r + k$ are powerful only finitely often.
\end{thm}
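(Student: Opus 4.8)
The plan is to mimic the proof of the preceding theorem on $k^n+r$, applying the abc conjecture to the decomposition $(n!)^r + k = z$, but two features of this problem require extra care: the summands $(n!)^r$ and $k$ are not coprime once $n$ is large, and the radical of $(n!)^r$ is no longer constant. I would set $a = (n!)^r$, $b = k$, and $c = z$, and first record the key observation that $\rad((n!)^r) = \rad(n!) = n\#$, since the primes dividing $(n!)^r$ are exactly the primes at most $n$.

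The first obstacle is coprimality. For $n$ larger than every prime factor of $k$ (and large enough that $r\,v_p(n!) \ge v_p(k)$ for each prime $p \mid k$), we have $k \mid (n!)^r$, so $a$ and $b$ share the factor $k$ and the abc conjecture does not apply directly. I would remove this common factor by passing to $a' = (n!)^r / k$, $b' = 1$, and $c' = z/k = a' + 1$, which is a genuine coprime triple with $a' + b' = c'$. A short valuation computation shows $\gcd(k, c') = 1$ for all large $n$: each prime $p \mid k$ divides $a'$, so $c' \equiv 1 \pmod p$. Consequently $z = k\,c'$ is a product of coprime factors, so if $z$ is powerful then $c'$ is powerful as well, which gives $\rad(c') \le c'^{1/2}$.

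The key quantitative estimate is then a comparison of two growth rates. Using the classical primorial bound $n\# = \prod_{p \le n} p < 4^n$ together with $\rad(a') \le \rad((n!)^r) = n\#$, and taking $\epsilon = \tfrac19$, I would bound $\rad(a'b'c')^{10/9} \le (n\# \cdot \rad(c'))^{10/9} \le (n\#)^{10/9}\, c'^{5/9}$. This is smaller than $c'$ as soon as $c' > (n\#)^{5/2}$. Since $c' > (n!)^r/k \ge (n/e)^{rn}/k$ grows like $e^{rn\log n}$, while $(n\#)^{5/2} \le 4^{5n/2}$ grows only exponentially in $n$, the inequality $c' > \rad(a'b'c')^{10/9}$ holds for all sufficiently large $n$.

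Finally I would invoke the abc conjecture: there are only finitely many coprime triples with $c' > \rad(a'b'c')^{10/9}$, and distinct values of $n$ produce strictly increasing values of $c' = (n!)^r/k + 1$, hence distinct triples, so only finitely many large $n$ can make $z$ powerful. Adjoining the finitely many small $n$ not covered by the asymptotic estimates, we conclude that $(n!)^r + k$ is powerful only finitely often. I expect the coprimality reduction to be the only genuinely delicate step; the growth comparison is routine once the bound $n\# < 4^n$ is in hand, and it is in fact more comfortable here than in the $k^n+r$ case, since $(n!)^r$ grows super-exponentially while its radical grows only exponentially.
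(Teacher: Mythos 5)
Your proposal is correct and follows essentially the same route as the paper's proof: both divide by $k$ to pass to the coprime triple $a=(n!)^r/k$, $b=1$, $c=z/k$, bound $\rad(abc)$ by $n\#$ times a square root coming from powerfulness, and conclude by comparing the exponential growth of $n\# < 4^n$ against the super-exponential growth of $(n!)^r$. The only differences are cosmetic (you take $\epsilon=\tfrac19$ where the paper takes $\epsilon=\tfrac12$, and you are somewhat more explicit about why $\gcd(k, z/k)=1$ and why $z/k$ is itself powerful).
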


\begin{proof}
Let $(n!)^r + k = z$ for powerful $z$, and let $n \geq k$, so that $k | n!$ and the equation becomes $\frac{(n!)^r}{k} + 1 = \frac{z}{k}$. Let $a = \frac{(n!)^r}{k}$, $b = 1$, and $c = \frac{z}{k}$. Observe that: $$\rad (\frac{(n!)^r}{k} )\leq \rad ({(n!)}^r) = \rad (n!) \leq n\#$$
 $$\rad (\frac{z}{k}) \leq \rad (z) \leq z^{\frac{1}{2}}$$ 
$$\rad (abc) = \rad (\frac{(n!)^r}{k} \frac{z}{k}) \leq (n\#)(z^{\frac{1}{2}})$$ So, letting
$\epsilon = \frac{1}{2}$, $(\rad (abc))^{\frac{3}{2}} \leq (n\#)^{\frac{3}{2}} z^{\frac{3}{4}} < z = c$ for $n$ sufficiently large, since $n \# \leq 4^n$ and $z > n!$. Thus, $(n!)^r + k$ is powerful only finitely often, if the abc conjecture is true.
\end{proof}

During CrowdMath 2018, Cushing proposed two more problems about powerful numbers and the abc conjecture on the CrowdMath forum \cite{cpp}. We solved one of the problems and made some progress on the other \cite{cpp, cppp}. The next lemma and theorem together solve the first problem by showing that $lim inf |a_{n+1}-a_{n}| = \infty$, where $a_1<a_2<a_3 \dots$ denotes the set of $3$-powerful numbers. We discuss the other problem in the next section.

\begin{lem}
Let $M, k$ be fixed positive integers. Then there are finitely many $x\in\mathbb{Z}_{>0}$ such that $p<M$ for every prime $p$ dividing $x(x+k)$.
\end{lem}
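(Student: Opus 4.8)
The plan is to invoke the abc conjecture, exploiting the observation that the hypothesis forces the radical of $x(x+k)$ to be bounded by a constant. Indeed, if every prime dividing $x(x+k)$ is less than $M$, then $\rad(x(x+k))$ divides $P := \prod_{p < M} p$, so $\rad(x(x+k)) \le P$, a constant depending only on $M$. In particular both $x$ and $x+k$ are built out of the finitely many primes below $M$, and this smoothness is exactly what will let abc ``bite.''

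First I would set up the additive relation $x + k = (x+k)$ so as to apply the abc conjecture to the triple $(x, k, x+k)$. Since abc requires coprimality, which need not hold here, I would divide out $d = \gcd(x,k)$. Because $d \mid k$ and $k$ is fixed, $d$ takes only finitely many values, so it suffices to bound the number of admissible $x$ for each fixed $d$. Writing $x = dx'$ and $k = dk'$ with $\gcd(x', k') = 1$, the triple $(x', k', x'+k')$ is pairwise coprime and satisfies $x' + k' = (x+k)/d$.

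Next I would bound the radical of this reduced triple. The primes dividing $x'$ and those dividing $x'+k' = (x+k)/d$ all divide $x(x+k)$, hence are less than $M$, while the primes dividing $k'$ divide the fixed $k$. Therefore $\rad(x' k' (x'+k')) \le P \cdot \rad(k) =: C$, a constant independent of $x$. Applying the abc conjecture with $\epsilon = 1$ (any fixed $\epsilon$ works), suppose for contradiction that infinitely many admissible $x$ exist. After fixing $d$ by the pigeonhole principle, we obtain infinitely many coprime triples $(x', k', x'+k')$ with $c = x'+k' = (x+k)/d \to \infty$, yet with $(\rad(x'k'(x'+k')))^{2} \le C^2$ bounded. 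Once $x$ is large enough that $x'+k' > C^2$, each such triple satisfies $c > (\rad(abc))^{1+\epsilon}$, contradicting the finiteness asserted by the abc conjecture. Hence only finitely many $x$ satisfy the hypothesis.

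I expect the only genuinely delicate point to be the coprimality reduction through $d = \gcd(x,k)$, which is what prevents a one-line application of abc; the conceptual heart of the argument is simply that smoothness pins $\rad(abc)$ to a constant while $c$ grows without bound, so the inequality $c > (\rad(abc))^{1+\epsilon}$ holds for all large $x$.
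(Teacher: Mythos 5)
Your proposal is correct and follows essentially the same route as the paper: reduce to the coprime case by dividing out $d=\gcd(x,k)$ (which the paper does more tersely, simply asserting it suffices to treat $(x,k)=1$), observe that the smoothness hypothesis bounds $\rad(abc)$ by a constant depending only on $M$ and $k$, and apply the abc conjecture with $\epsilon=1$ once $x$ is large enough that $c$ exceeds the square of that constant. Your more explicit handling of the gcd reduction is a welcome clarification but not a different argument.
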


\begin{proof}
It suffices to prove that there are finitely many $x\in\mathbb{Z}_{>0}$ with $(x,k)=1$ such that $p<M$ for every prime $p$ dividing $x(x+k)$. Let $x$ be an integer with $x>(kM!)^2$. Then we have
$$\rad(kx(x+k))<k\cdot\prod_{p<M}p\le k\cdot M!<\sqrt{x+k}\implies x+k>\rad(kx(x+k))^2$$.

By the abc conjecture with $a=x, b=k, c=x+k$ and $\epsilon=1$, there are finitely many $x\in\mathbb{Z}_{>0}$ such that $p<M$ for every prime $p$ dividing $x(x+k)$.
\end{proof}

\begin{thm}
If $a_1<a_2<a_3 \dots$ denotes the set of $3$-powerful numbers, then $lim inf |a_{n+1}-a_{n}| = \infty$.
\end{thm}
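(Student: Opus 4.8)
The plan is to prove the equivalent statement that for every positive integer $M$ there are only finitely many $n$ with $a_{n+1}-a_n \le M$; since $M$ is arbitrary, this is exactly $\liminf_n |a_{n+1}-a_n| = \infty$. Because $\{1,\dots,M\}$ is finite, it suffices to fix a single gap $k$ with $1 \le k \le M$ and prove that there are only finitely many $3$-powerful numbers $a$ for which $a+k$ is again $3$-powerful; taking the (finite) union over $k \in \{1,\dots,M\}$ then bounds the number of consecutive pairs lying within $M$ of each other.

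The central obstacle is that $a$ and $a+k$ need not be coprime, so the abc conjecture cannot be applied to the triple $(a,k,a+k)$ directly. I would resolve this by examining $g := \gcd(a,a+k)$. Since $g \mid a$ and $g \mid a+k$ we have $g \mid k$, so $g \le k \le M$ is bounded. Moreover, any prime $p \mid g$ divides both $3$-powerful numbers, whence $p^3 \mid a$ and $p^3 \mid a+k$, forcing $p^3 \mid k$; in particular every shared prime satisfies $p < M$, which is precisely the smooth regime isolated by the preceding lemma. Writing $a = gu$ and $a+k = gv$ with $\gcd(u,v)=1$, the relation becomes the coprime equation $u + \frac{k}{g} = v$, to which abc does apply.

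The key estimate exploits that a $3$-powerful number $n$ satisfies $\rad(n) \le n^{1/3}$. As $u \mid a$ and $v \mid a+k$, this yields $\rad\!\left(u \cdot \frac{k}{g} \cdot v\right) \le \rad(a)\,\rad(a+k)\cdot k \le M\,(a+k)^{2/3}$, while the largest entry of the reduced triple is $v = (a+k)/g \ge (a+k)/M$. Taking $\epsilon = \frac{1}{3}$, a direct computation shows $v > \rad\!\left(u \cdot \frac{k}{g} \cdot v\right)^{1+\epsilon}$ as soon as $a+k > M^{21}$, a bound depending only on $M$. By the abc conjecture there are then only finitely many such reduced triples, hence only finitely many pairs of $3$-powerful numbers at distance exactly $k$.

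Assembling the cases over $k \in \{1,\dots,M\}$ shows that only finitely many pairs of $3$-powerful numbers lie within $M$ of one another, so all but finitely many consecutive gaps exceed $M$; as $M$ was arbitrary, $\liminf_n |a_{n+1}-a_n| = \infty$. The hard part is the non-coprimality of $a$ and $a+k$: the entire argument hinges on the observation that $g \mid k$ pins the common factor down to a bounded, $M$-smooth integer, after which the radical bound $\rad(n)\le n^{1/3}$ for $3$-powerful $n$ and a single application of abc to the coprime cofactors close the argument.
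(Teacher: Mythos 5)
Your argument is correct, and it follows the same basic strategy as the paper --- fix the gap $k$, divide out the common factor of the two $3$-powerful numbers to get a coprime triple, bound the radical using $\rad(n)\le n^{1/3}$, and invoke abc --- but it closes the argument in a genuinely different and, in fact, cleaner way. The paper's proof reaches the inequality $\frac{x}{d}>\frac{1}{d\cdot\rad(k)^{3/2}}\rad\bigl(\tfrac{xyk}{d^3}\bigr)^{3/2}$ and must then absorb the constant $d\cdot\rad(k)^{3/2}$ into the exponent, dropping from $3/2$ to $1.4$; this only works once the radical itself is large, which is exactly what the auxiliary smoothness lemma (finitely many $x$ with $x(x+k)$ having all prime factors below $M$) is imported to guarantee. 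You avoid that lemma entirely: by lower-bounding the largest term of the reduced triple as $v=(a+k)/g\ge (a+k)/M$ and comparing directly against $\rad\bigl(u\cdot\tfrac{k}{g}\cdot v\bigr)^{4/3}\le M^{4/3}(a+k)^{8/9}$, you get the exceptional-triple inequality unconditionally once $a+k>M^{21}$, with no case analysis on whether the radical is small. Your observation that $g\mid k$ (indeed $p^3\mid k$ for each prime $p\mid g$) plays the role the paper assigns to $d=(y,k)$, and your injectivity of the passage from pairs to reduced triples (recover $g$ from $k/g$, then $a=gu$) makes the finiteness conclusion airtight. The trade-off is minor: the paper's route showcases the smoothness lemma, which is of independent interest, while yours is shorter and self-contained.
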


\begin{proof}
Fix $k$ and suppose that $x$ and $y$ are $3$-powerful numbers with $x=y+k$ and $(y,k)=d$. Observe that
$$\rad(\frac{xyk}{d^3})\le \rad(x)\rad(y)\rad(k) \le \rad(k)\sqrt[3]{xy}<x^{\frac{2}{3}}\rad(k)$$But this implies that
$$\frac{x}{d}>\frac{1}{d\cdot\rad(k)^{1.5}}(\rad(\frac{xyk}{d^3}))^{1.5}$$ Combining with the last lemma, we choose $y$ large enough so that
$$\frac{x}{d}>\frac{1}{d\cdot\rad(k)^{1.5}}(\rad(\frac{xyk}{d^3}))^{1.5}>(\rad(\frac{xyk}{d^3}))^{1.4}$$ Hence by the $abc$ conjecture with $a=\frac{y}{d}, b=\frac{k}{d}, c=\frac{x}{d}$ and $\epsilon=0.4$, there are only finitely many $y$ where both $y$ and $y+k$ are $3$-powerful numbers.
\end{proof}

\section{Concluding remarks and open problems}

We note that in most of our results, the set of powerful numbers can be replaced with the set of numbers which are at least their radical squared, and the set of $k$-powerful numbers can be replaced with the set of numbers which are at least their radical to the $k^{th}$ power. The following problem about powerful numbers was proposed during CrowdMath 2018. There was some progress on this problem for some specific polynomials \cite{cppp}, but no progress in general.

\begin{quest}
Let $P$ be a polynomial with integer coefficients and at least $3$ simple roots. Is it true that $P(n)$ is powerful only finitely often?
\end{quest}

Although we answered most of the questions from \cite{cupa}, Problem 1 in that paper is still unsolved. 

\begin{quest} \cite{cupa} Does every coprime arithmetic progression contain infinitely many powerful pairs?
\end{quest}

We also answered Problem 3 from \cite{cupa} for all $n$ except $n = 4$.

\begin{quest}
Is $x^4+y^4$ powerful only finitely often when $(x, y) = 1$?
\end{quest}

\section{Acknowledgments}

CrowdMath is an open program created by the MIT Program for Research in Math, Engineering, and Science (PRIMES) and Art of Problem Solving that gives high school and college students all over the world the opportunity to collaborate on a research project. The 2018 CrowdMath project is online at http://www.artofproblemsolving.com/polymath/mitprimes2018.


\begin{thebibliography}{20}

\bibitem{cpp} DCushing, More Cushing-Pascoe problems. \url{https://artofproblemsolving.com/polymath/mitprimes2018/f/c583627h1573657_more_cushingpascoe_problems}
\bibitem{cupa} D. Cushing, J. Pascoe, Powerful numbers and the abc conjecture. \url{https://arxiv.org/pdf/1611.01192.pdf}
\bibitem{cppp} JGeneson, polynomial powerful numbers problem. \url{https://artofproblemsolving.com/polymath/mitprimes2018/f/c583627h1577586_polynomial_powerful_numbers_problem}
\bibitem{mas} D. Masser, Open problems. In Chen, W. W. L. (ed.). Proceedings of the Symposium on Analytic Number Theory. London: Imperial College, 1985.
\bibitem{van} M. van Frankenhuysen, The ABC Conjecture Implies Roth's Theorem and Mordell's Conjecture. Mat. Contemp. 16, 45-72, 1999.

\end{thebibliography}
\end{document}